%
%
%

\documentclass[graybox]{svmult}



%

\bibliographystyle{alpha}

\def\Dbar{\leavevmode\lower.6ex\hbox to 0pt{\hskip-.23ex \accent"16\hss}D}

\def\bZ{{\mbox{\bf Z}}}
\def\bC{{\mbox{\bf C}}}

\def\paf{{\mbox{\rm PAF}}}
\def\psd{{\mbox{\rm PSD}}}
\def\dft{{\mbox{\rm DFT}}}


\makeindex             


\begin{document}

\title*{Periodic Golay pairs of length 72}

\author{Dragomir {\v{Z}}. {\Dbar}okovi{\'c} and Ilias S. Kotsireas}
\institute{Dragomir {\v{Z}}. {\Dbar}okovi{\'c} 
\at University of Waterloo, Department of Pure Mathematics, Waterloo, Ontario, N2L 3G1, Canada
\email{djokovic@math.uwaterloo.ca}
\and
Ilias S. Kotsireas 
\at  Wilfrid Laurier University, Department of Physics
\& Computer Science, Waterloo, Ontario, N2L 3C5, Canada 
\email{ikotsire@wlu.ca}}
%
%

\maketitle

\centerline{ {\bf Dedicated to Hadi Kharaghani on his 70th birthday} }
\vspace {1 cm}

\abstract{
We construct supplementary difference sets (SDS) with parameters  $(72;36,30;30)$. These SDSs give periodic Golay pairs of length 72. No periodic Golay pair of length 72 was known previously.
The smallest undecided order for periodic Golay pairs is now 90. The periodic Golay pairs constructed here are the first examples
having length divisible by a prime congruent to 3 modulo 4. 
The main tool employed is a recently introduced compression method. We observe that Turyn's multiplication of Golay pairs can  be also used to multiply a Golay pair and a periodic Golay pair. }

\section{Introduction}

Let $v$ be any positive integer. We say that a sequence $A=[a_0,a_1,\ldots,a_{v-1}]$ is a {\em binary sequence} if  $a_i\in\{1,-1\}$ for all $i$. We denote by $\bZ_v=\{0,1,\ldots, v-1\}$ the ring of integers modulo $v$.
There is a bijection from the set of all binary sequences of
length $v$ to the set of all subsets of $\bZ_v$ which assigns
to the sequence $A$ the subset $\{i\in\bZ_v:a_i=-1\}$.
If $X\subseteq\bZ_v$, then the corresponding binary sequence $[x_0,x_1,\ldots,x_{v-1}]$ has $x_i=-1$ if $i\in X$ and $x_i=+1$ otherwise. We associate to $X$ the cyclic matrix $C_X$ of order $v$ having this sequence as its first row.

Periodic Golay pairs are periodic analogs of the well known
Golay pairs. Let us give a precise definition. For any complex
sequence $A=[a_0,a_1,\ldots,a_{v-1}]$, its {\em periodic autocorrelation} is a complex valued function
$\paf_A:\bZ_v\to\bC$ defined by
\begin{equation}
\paf_A(s)=\sum_{j=0}^{v-1} a_{j+s}\bar{a}_j,
\end{equation}
where the indexes are computed modulo $v$ and $\bar{a}$ is the complex conjugate of $a$. A pair of binary sequences $(A,B)$ of length $v$ is a {\em periodic Golay pair} if
$\paf_A(j)+\paf_B(j)=0$ for $j\ne0$. For more information on these pairs see \cite{DK:arXiv:2013}. The length $v$ of a periodic Golay pair must be even except for the trivial case 
$v=1$,

Many periodic Golay pairs of even length $v$ can be constructed by using supplementary difference sets with suitable parameters $(v;r,s;\lambda)$. We recall that these parameters are nonnegative integers such that $\lambda(v-1)=r(r-1)+s(s-1)$. 
(See section \ref{SDS} below for the formal definition of SDSs over a finite cyclic group.) For convenience, we also introduce the parameter $n=r+s-\lambda$. Without any loss of generality we may assume that the parameter set is {\em normalized} which means that we have $v/2\ge r\ge s\ge0$.
The SDSs that we need are those for which $v=2n$. We refer to them as {\em periodic Golay SDS}.

The feasible parameter sets for the periodic Golay SDSs can be easily generated by using the following proposition. 

\begin{proposition} \label{Formule}
Let $P$ be the set of ordered pairs $(x,y)$ of integers $x,y$ 
such that $x\ge y\ge0$ and $x>0$. Let $Q$ be the set of normalized feasible parameter sets $(v;r,s;\lambda)$, with $v$ even, for periodic Golay SDSs. Thus, it is required that $v=2n$ where $n=r+s-\lambda$. Then the map $P\to Q$ given by the formula
$$
(x,y)\to ( 2(x^2+y^2);x^2+y^2-y,x^2+y^2-x; x^2+y^2-x-y )
$$
is a bijection. 
\end{proposition}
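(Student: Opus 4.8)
The plan is to write down the obvious candidate inverse and verify that the two compositions are identity maps; this reduces injectivity and surjectivity to a single algebraic equivalence. Throughout I abbreviate $n=x^2+y^2$, so the stated image of $(x,y)\in P$ is the quadruple $(2n;\,n-y,\,n-x;\,n-x-y)$. Note first that $v=2n$ and $r+s-\lambda=(n-y)+(n-x)-(n-x-y)=n$, which matches the defining requirement $n=r+s-\lambda$ and $v=2n$ for membership in $Q$.

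First I would check that the map really sends $P$ into $Q$. Integrality of the four entries is clear, and the normalization chain $v/2\ge r\ge s\ge0$ unwinds to $n\ge n-y\ge n-x\ge0$, i.e. to $y\ge0$, $x\ge y$, and $n\ge x$; the last inequality holds because $n-x=x^2-x+y^2=x(x-1)+y^2\ge0$ for the integer $x\ge1$. The one genuinely computational point is the feasibility identity $\lambda(v-1)=r(r-1)+s(s-1)$: expanding both sides as polynomials in $x,y,n$ and then substituting $n=x^2+y^2$ collapses their difference to $0$. This is the crux calculation and the only step that needs care.

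For the inverse I send $(v;r,s;\lambda)\in Q$ to $(x,y)=(v/2-s,\,v/2-r)$. Writing $n=v/2$, the normalization $v/2\ge r\ge s$ immediately gives $x\ge y\ge0$. To land in $P$ I must also check the strict inequality $x>0$, i.e. $s<n$: if instead $s=n$, then $r=n$ as well, $\lambda=r+s-n=n$, and the feasibility relation reads $n(2n-1)=2n(n-1)$, forcing $n=0$ and hence $v=0$, which is excluded; so $s<n$ and $x>0$. The fact that makes this a genuine inverse is the equivalence, valid under the standing relations $v=2n$ and $\lambda=r+s-n$,
$$\lambda(2n-1)=r(r-1)+s(s-1)\quad\Longleftrightarrow\quad x^2+y^2=n,$$
obtained by substituting $r=n-y,\ s=n-x,\ \lambda=n-x-y$ and cancelling like terms; the feasibility condition is therefore \emph{exactly} the statement that $(v/2-s,\,v/2-r)$ is a lattice point on the circle of squared radius $n$.

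With this equivalence in hand both directions close quickly. For surjectivity, starting from $(v;r,s;\lambda)\in Q$ and setting $x=n-s$, $y=n-r$, feasibility supplies $x^2+y^2=n=v/2$, so the forward map returns $(2(x^2+y^2);\,(x^2+y^2)-y,\,(x^2+y^2)-x;\,(x^2+y^2)-x-y)=(v;r,s;\lambda)$. Injectivity is even more direct, since $x$ and $y$ are recovered from any image by $x=v/2-s$ and $y=v/2-r$, so distinct pairs yield distinct images. Hence the two maps are mutually inverse and the stated map is a bijection. The only real obstacle is bookkeeping: ensuring the boundary case $x=0$ (equivalently the degenerate $v=0$) is correctly ruled out, so that the image of the inverse lands in $P$ itself and not merely in its closure.
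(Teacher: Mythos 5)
Your proof is correct and follows exactly the paper's approach: the paper's entire proof consists of exhibiting the same inverse map $(v;r,s;\lambda)\mapsto(v/2-s,\,v/2-r)$, leaving all verifications implicit. You have simply supplied the omitted details — the crux equivalence between the feasibility identity and $x^2+y^2=n$, the normalization inequalities, and the exclusion of the degenerate case $x=0$ — all of which check out.
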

\begin{proof}
The inverse map $Q\to P$ is given by
$$
(v;r,s;\lambda)\to(\frac{v}{2}-s,\frac{v}{2}-r).
$$
\end{proof}

Note that $n=x^2+y^2$.

If $(A,B)$ is a periodic Golay pair of length $v$, then the corresponding pair of subsets $(X,Y)$ of $\bZ_v$ is an SDS. In the nontrivial cases $(v>1)$, the parameters $(v;r,s;\lambda)$ satisfy the equation $v=2n$. Recall that $n=r+s-\lambda$. The converse is also true, i.e., if $(X,Y)$ is an SDS with parameters
$(v;r,s;\lambda)$ then the corresponding binary sequences $(A,B)$ form a periodic Golay pair of length $v$. Moreover, if $a=v-2r$ and $b=v-2s$ then $a^2+b^2=2v$. In particular, $v$ must be even and a sum of two squares. The associated matrices $C_X$ and $C_Y$ satisfy the equation
\begin{equation} \label{MatEqPG}
C_X C_X^T + C_Y C_Y^T = 2vI_v.
\end{equation}

Our main result is the construction of several periodic Golay pairs of length 72. This is accomplished by constructing the SDSs with parameters $(72;36,30;30)$. The main tool that we use in the construction is the method of compression of SDSs developed in \cite{DK:JCD:2014}. This method uses a nontrivial factorization $v=md$ and so it can be applied only when $v$ is a composite integer. In this case we used the factorization with $m=3$ and $d=24$.

In section \ref{SDS} we recall the definition of SDSs over finite cyclic groups, and in section \ref{Compression} we establish a relationship between power spectral density functions of a complex sequence of length $v=md$ and its compressed sequence of length $d$. This relationship was used to speed up some of the computations.

In section \ref{sec:PG-results} we list 8 nonequivalent SDSs which give 8 periodic Golay pairs of length 72. This provides the first examples of periodic Golay pairs whose length is divisible by a prime congruent to 3 modulo 4.

\section{Supplementary difference sets} \label{SDS}

We recall the definition of SDSs. Let $k_1,\ldots,k_t$ be positive integers and $\lambda$ an integer such that
\begin{equation} \label{par-lambda}
\lambda (v - 1) = \sum_{i=1}^t k_i (k_i - 1).
\end{equation}

\begin{definition}
We say that the subsets $X_1,\ldots,X_t$ of $\bZ_v$ with
$|X_i|=k_i$ for $i\in\{1,\ldots,t\}$ are {\em supplementary difference sets (SDS)} with parameters $(v;k_1,\ldots,k_t;\lambda)$, if for every nonzero element $c\in\bZ_v$ there are exactly $\lambda$ ordered triples $(a,b,i)$ such that $\{a,b\}\subseteq X_i$ and $a-b=c \pmod{v}$.
\end{definition}

These SDS are defined over the cyclic group of order $v$, namely the additive group of the ring $\bZ_v$. More generally SDS can be defined over any finite abelian group, and there are also further generalizations where the group may be any finite group. However, in this paper we shall consider only the cyclic case.

In the context of an SDS, say $X_1,\ldots,X_t$, with parameters $(v;k_1,\ldots,k_t;\lambda)$, we refer to the subsets $X_i$ as the {\em base blocks} and we introduce an additional parameter, $n$, defined by:
\begin{equation} \label{par-n}
n = k_1 + \cdots + k_t - \lambda.
\end{equation}

If $x$ is an indeterminate, then the quotient ring
$\bC[x]/(x^v-1)$ is isomorphic to the ring of complex circulant matrices of order $v$. Under this isomorphism $x$ corresponds to the cyclic matrix with first row $[0,1,0,0,\ldots,0]$. By applying this isomorphism to the identity \cite[(13)]{DK:JCD:2014}, we obtain that the following matrix identity holds
\begin{equation} \label{MatNorm}
\sum_{i=1}^t C_i C_i^T = 4nI_v +(tv-4n)J_v,
\end{equation}
where $C_i=C_{X_i}$ is the cyclic matrix associated to $X_i$.

In this paper we are mainly interested in SDSs $(X,Y)$ with two base blocks $(t=2)$ and $v=2n$ Then the identity (\ref{MatNorm}) reduces to the identity (\ref{MatEqPG}).

\section{Compression of SDSs} \label{Compression}

Let $A$ be a complex sequence of length $v$. For the standard definitions of periodic autocorrelation functions $(\paf_A)$, discrete Fourier transform $(\dft_A)$, power spectral density $(\psd_A)$ of $A$, and the definition of complex complementary sequences, we refer the reader to \cite{DK:JCD:2014}. If we have a collection of complex complementary sequences of length $v=dm$, then we can compress them to obtain complementary sequences of length $d$. We refer to the ratio $v/d=m$ as the {\em compression factor}. Here is the precise definition.

\begin{definition}
Let $A = [a_0,a_1,\ldots,a_{v-1}]$ be a complex sequence of length $v = dm$ and set
\begin{equation} \label{koef-kompr}
a_j^{(d)}=a_j+a_{j+d}+\ldots+a_{j+(m-1)d}, \quad
j=0,\ldots,d-1.
\end{equation}
Then we say that the sequence
$A^{(d)} = [a_0^{(d)},a_1^{(d)},\ldots,a_{d-1}^{(d)}]$
is the {\em $m$-compression} of $A$.
\end{definition}

Let $X,Y$ be an SDS with parameters $(v;r,s;\lambda)$ with 
$v=2n$ (and $n=r+s-\lambda$). Assume that $v=md$ is a nontrivial factorization. Let $A,B$ be the binary sequences of length $v$ associated to $X$ and $Y$, respectively. Then the $m$-compressed sequences $A^{(d)},B^{(d)}$ form a complementary pair. In general they are not binary sequences, their terms belong to the set
$\{m,m-2,\ldots,-m+2,-m\}$.
The search for such pairs $X,Y$ is broken into two stages: first we construct the candidate complementary sequences $A^{(d)},B^{(d)}$ of length $d$, and second we lift each of them and search to find the required pairs $(X,Y)$. Each of the stages requires
a lot of computational resources. There are additional theoretical results that can be used to speed up these computations. Some of them are descirbed in \cite{DK:JCD:2014},
namely we use ``bracelets'' and ``charm bracelets'' to speed up
the first stage. We use \cite[Theorem 1]{DK:ADTHM:2014} to speed up the second stage.

\section{Multiplication of Golay and periodic Golay pairs} 
\label{GN}

If $Z\subseteq\bZ_v$ we set $Z'=\bZ_v\setminus Z$. To $Z$ we associate the binary sequence 
$[a_0,a_1,\ldots,a_{v-1}]$, where $a_i=-1$ if $i\in Z$ and $a_i=+1$ otherwise. This gives a one-to-one 
correspondence between subsets $Z\subseteq\bZ_v$ and the set of binary sequences of length $v$. 
If $(X,Y)$ is an SDS with parameters $(v;r,s;\lambda)$ such that $v=2n$, $(n=r+s-\lambda)$, 
then the associated binary sequences of $X$ and $Y$ form a periodic Golay pair. Conversely, each periodic 
Golay pair of length $v>1$ arises in this way from an SDS with $v=2n$.

If there exists a Golay pair resp. a periodic Golay pair of length $v$ then we say that 
$v$ is a {\em Golay number} resp. a {\em periodic Golay number}. We denote the set of Golay numbers by $\Gamma$ 
and the set of periodic Golay numbers by $\Pi$. By $\Gamma_0$ we denote the set of known Golay numbers, 
i.e., $\Gamma_0=\{2^a 10^b 26^c: a,b,c\in\bZ_+\}$, where $\bZ_+$ is the set of nonnegative integers.
It is not known whether $\Gamma_0=\Gamma$. Since every Golay pair is also a periodic Golay pair, we have 
$\Gamma\subseteq\Pi$. Moreover, this inclusion is strict. Indeed, the periodic Golay numbers $v=34,50,58,68,72,74,82$ (see 
\cite{{DK:arXiv:2013}}) are not in $\Gamma$ (see 
\cite{{Borwein:Ferguson:2003}}). 

If $X$ and $Y$ are sets of positive integers, we shall denote by $XY$ the set of all products $xy$ with 
$x\in X$ and $y\in Y$. Given a Golay pair of length $g$ and a periodic Golay pair of length $v$, then one can multiply them
to obtain a periodic Golay pair of length $gv$. In fact there are now two such multiplications which are essentially different.
Consequently, the set $\Pi\setminus\Gamma_0$ is infinite as it contains the set 
$\Gamma_0\cdot\{34,50,58,72,74,82,122,202,226\}$.

The first multiplication is described in the very recent paper \cite{GSDK:Bio:2014}. It is an easy consequence of \cite[Theorems 13,16]{KS}. We give below a simple description in terms of the SDS $(X,Y)$ associated to a periodic Golay pair. The parameters $(v;r,s;\lambda)$ and $n=r+s-\lambda$ of this SDS satisfy the equation $v=2n$. 

\begin{proposition} \label{Mnozenje-1}
Let $(U,V)$ be a Golay pair of length $g$ and $(X,Y)$ the SDS 
associated to a periodic Golay pair of length $v=2n$. 
Let $x,y$ be two indeterminates and define the sequence 
$A=[a_0,a_1,\ldots,a_{v-1}]$ by setting
$$
a_i=\cases{
x, &if $i\in X\cap Y$, \cr
-x, &if $i\in X'\cap Y'$, \cr
y, &if $i\in X\setminus Y$, \cr
-y, &if $i\in Y\setminus X$.}
$$
Next, let $B$ be the sequence obtained from $A$ by first reversing $A$ and then simultaneously replacing $x$ with $y$ and $y$ with $-x$. 
Finally, by replacing in both $A$ and $B$ the indeterminates $x$ and $y$ with $U$ and $V$, respectively, one obtains a periodic Golay pair of length $gv$.
\end{proposition}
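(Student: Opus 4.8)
The plan is to recast the whole construction in terms of generating polynomials, working in the group ring $\bC[z]/(z^v-1)$, in which the periodic Golay condition for a real pair $(A,B)$ is the single congruence $A(z)A(z^{-1})+B(z)B(z^{-1})\equiv\mathrm{const}\pmod{z^v-1}$. The first and decisive step is to rewrite the four-line definition of $A$ in closed form. Let $\alpha,\beta$ be the binary sequences associated with $X,Y$, so that $\alpha_i=-1$ exactly when $i\in X$, and similarly for $\beta$. Checking the four cases of the definition against the values of $\alpha_i,\beta_i$ shows that
\[
A=-\tfrac12\bigl[(x+y)\alpha+(x-y)\beta\bigr],
\]
that is, each entry $a_i$ equals the displayed linear form in $x,y$. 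This exhibits $A$ as an $\bR[x,y]$-linear combination of the two binary sequences, and it is exactly what will force the unwanted cross terms to cancel later.

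Treating $B$ the same way, reversal replaces $\alpha(z),\beta(z)$ by $\alpha(z^{-1}),\beta(z^{-1})$ (up to a cyclic shift, which is irrelevant for periodic autocorrelations), and the substitution $x\mapsto y,\ y\mapsto -x$ sends $x+y\mapsto-(x-y)$ and $x-y\mapsto x+y$; hence
\[
B=-\tfrac12\bigl[(x+y)\beta^R-(x-y)\alpha^R\bigr],
\]
where $\alpha^R,\beta^R$ are the reversals. I would then make the final substitution precise: replacing $x,y$ by the length-$g$ Golay sequences $U,V$ is the block substitution $x\mapsto U(w),\ y\mapsto V(w),\ z\mapsto w^{g}$ on generating functions, which yields sequences $\hat A,\hat B$ of length $gv$. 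Because $U,V$ are binary and every entry of $A,B$ is a single signed indeterminate, each entry of $\hat A,\hat B$ is $\pm1$, so they are genuine binary sequences.

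The core of the argument is the evaluation of $\hat A(w)\hat A(w^{-1})+\hat B(w)\hat B(w^{-1})$ modulo $w^{gv}-1$. Writing $S=U(w)+V(w)$, $D=U(w)-V(w)$, $\alpha_g=\alpha(w^{g})$, $\beta_g=\beta(w^{g})$, and using a star for $w\mapsto w^{-1}$, the two decompositions give $\hat A=-\tfrac12(S\alpha_g+D\beta_g)$ and $\hat B=-\tfrac12(S\beta_g^{*}-D\alpha_g^{*})$. Expanding both products and adding, the mixed terms carrying $SD^{*}$ and $DS^{*}$ cancel by commutativity, leaving the clean factorization
\[
\hat A\hat A^{*}+\hat B\hat B^{*}=\tfrac14\,(SS^{*}+DD^{*})\,(\alpha_g\alpha_g^{*}+\beta_g\beta_g^{*}).
\]
I would finish by evaluating the two factors separately. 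Since $(U,V)$ is a Golay pair, $SS^{*}+DD^{*}=2\bigl(U(w)U(w^{-1})+V(w)V(w^{-1})\bigr)=4g$ holds as an exact Laurent-polynomial identity; and substituting $z=w^{g}$ into the periodic Golay identity for $(\alpha,\beta)$, under which $z^v-1$ becomes $w^{gv}-1$, gives $\alpha_g\alpha_g^{*}+\beta_g\beta_g^{*}\equiv2v\pmod{w^{gv}-1}$. The product is therefore congruent to $\tfrac14\cdot4g\cdot2v=2gv$, which is precisely the periodic Golay condition for $\hat A,\hat B$ of length $gv$.

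The step I expect to demand the most care is the interface between the two flavours of autocorrelation: the Golay pair $(U,V)$ supplies an exact (aperiodic) constant $4g$, whereas $(\alpha,\beta)$ supplies a constant that is valid only modulo $w^{gv}-1$. One must verify that the block substitution $z\mapsto w^{g}$ transports the modulus $z^v-1$ to $w^{gv}-1$ and that multiplying an exact constant by a residue-class constant is legitimate. Granting the closed-form decompositions of $A$ and $B$, everything else is the routine cancellation recorded above.
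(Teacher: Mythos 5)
Your proof is correct. The case-check behind the closed form $a_i=-\tfrac12[(x+y)\alpha_i+(x-y)\beta_i]$ verifies in all four cases; the reversal contributes, after the block substitution, only the unit factor $w^{g(v-1)}$, which cancels against its star modulo $w^{gv}-1$ and so can indeed be dropped; the cross terms $SD^{*}\alpha_g\beta_g^{*}$ and $DS^{*}\beta_g\alpha_g^{*}$ cancel exactly (not merely modulo the ideal); and the transport of the modulus is sound, since $P(z)-Q(z)=(z^v-1)R(z)$ becomes $P(w^g)-Q(w^g)=(w^{gv}-1)R(w^g)$ under $z\mapsto w^g$.

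Your route, however, is genuinely different from the paper's, because the paper gives no proof of Proposition~\ref{Mnozenje-1} at all: it attributes the construction to \cite{GSDK:Bio:2014} and declares it an easy consequence of Theorems 13 and 16 of \cite{KS}, so its ``proof'' is by citation to the Koukouvinos--Seberry machinery of complementary sequences and orthogonal designs. What you have done instead is transplant, self-containedly, the exact technique the authors reserve for Proposition~\ref{Mnozenje-2} (Turyn's multiplication): an exact Laurent-polynomial identity $U(w)U(w^{-1})+V(w)V(w^{-1})=2g$ on one side, a congruence $\alpha(w^g)\alpha(w^{-g})+\beta(w^g)\beta(w^{-g})\equiv 2v \pmod{w^{gv}-1}$ on the other, multiplied together after the mixed terms cancel. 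This buys two things: the paper becomes independent of \cite{KS}, and the two multiplications are seen to rest on the same factorization $\tfrac14(SS^{*}+DD^{*})(\alpha_g\alpha_g^{*}+\beta_g\beta_g^{*})$, differing only in how the sum/difference components $S=U+V$, $D=U-V$ are distributed over $\alpha$, $\beta$ and their reversals --- which makes the paper's observation that the two products can be nonequivalent (as in its length-68 example) the more pointed, since the common identity cannot distinguish them. One presentational quibble: when you invoke ``cyclic shift irrelevance'' you should say explicitly, as you implicitly use, that the dropped factor is the unit $w^{g(v-1)}$ and that $w^{g(v-1)}\cdot w^{-g(v-1)}=1$, so $\hat B\hat B^{*}$ is literally unchanged; the statement about periodic autocorrelations of the uncompressed length-$v$ sequence is not quite the assertion needed after substitution.
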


We observed subsequently that Turyn's multiplication of Golay pairs provides also the multiplication of Golay and periodic Golay pairs. For convenience let us associate to each binary sequence $A=[a_0,a_1,\ldots,a_{v-1}]$ the polynomial 
$A(z):=a_0+a_1z+\cdots+a_{v-1}z^{v-1}$ in the indeterminate $z$. 
Then Turyn's multiplication $(A,B)\cdot(C,D)=(E,F)$ of 
Golay pairs $(A,B)$ of length $g$ and $(C,D)$ of length $v$ is given by the formulas (see \cite{RT})
\begin{eqnarray}
\label{jed-1}
E(z) &=& \frac{1}{2}[A(z)+B(z)]C(z^g) + 
\frac{1}{2}[A(z)-B(z)]D(z^{-g}) z^{gv-g}, \\
\label{jed-2}
F(z) &=& \frac{1}{2}[B(z)-A(z)]C(z^{-g}) z^{gv-g} + 
\frac{1}{2}[A(z)+B(z)]D(z^g). 
\end{eqnarray}
The product pair $(E,F)$ is a Golay pair of length $gv$. 

\begin{proposition} \label{Mnozenje-2}
If $(A,B)$ is a Golay pair of length $g$ and $(C,D)$ is a periodic Golay pair of length $v$ then the pair $(E,F)$ 
given by the formulas (\ref{jed-1}) and (\ref{jed-2}) is 
a periodic Golay pair of length $gv$.
\end{proposition}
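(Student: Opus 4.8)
The plan is to translate both hypotheses and the desired conclusion into statements about the magnitudes of the associated polynomials on the unit circle, and then to evaluate. Recall (as in Section~\ref{Compression}) that a pair of binary sequences $(C,D)$ of length $v$ is a periodic Golay pair precisely when $\psd_C(\omega)+\psd_D(\omega)=|C(\omega)|^2+|D(\omega)|^2=2v$ for every $v$-th root of unity $\omega$; by Fourier inversion this is equivalent to the autocorrelation condition $\paf_C(s)+\paf_D(s)=0$ for $s\neq0$. By contrast, a genuine (aperiodic) Golay pair $(A,B)$ of length $g$ satisfies the stronger identity $|A(z)|^2+|B(z)|^2=2g$ for \emph{every} $z$ with $|z|=1$, not merely at $g$-th roots of unity. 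I would first record these reformulations, together with the reality relations $\overline{C(\bar\omega)}=C(\omega)$ and $\overline{D(\bar\omega)}=D(\omega)$, which hold because $C,D$ have real coefficients.

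Before evaluating I would check that $E$ and $F$ are in fact binary sequences of length $gv$. This is the usual block structure of Turyn's product and uses no autocorrelation property of $(C,D)$: writing $\frac{1}{2}[A(z)+B(z)]$ and $\frac{1}{2}[A(z)-B(z)]$ as polynomials of degree $<g$ whose coefficients lie in $\{-1,0,1\}$ and are complementary (exactly one of the two $i$-th coefficients is nonzero), one sees that on the $k$-th block of $g$ consecutive exponents the coefficient of $E$ equals $c_k(\pm1)$ or $d_{v-1-k}(\pm1)$, hence $\pm1$; similarly for $F$. Thus only the binary nature of $C,D$, not their complementarity, enters here.

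The heart of the argument is the evaluation at a fixed $gv$-th root of unity $\zeta$. Putting $z=\zeta$ and $\omega=z^g$, the key observation---and the reason the periodic property of $(C,D)$ suffices---is that $\omega$ is a $v$-th root of unity, while $z^{gv-g}=z^{-g}=\bar\omega$ and $|z|=1$. Writing $p=\frac{1}{2}[A(z)+B(z)]$ and $q=\frac{1}{2}[A(z)-B(z)]$, the formulas (\ref{jed-1}) and (\ref{jed-2}) become $E(\zeta)=p\,C(\omega)+q\,\bar\omega\,D(\bar\omega)$ and $F(\zeta)=-q\,\bar\omega\,C(\bar\omega)+p\,D(\omega)$. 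Expanding $|E(\zeta)|^2+|F(\zeta)|^2$, the diagonal terms group as $|p|^2\big(|C(\omega)|^2+|D(\omega)|^2\big)+|q|^2\big(|C(\bar\omega)|^2+|D(\bar\omega)|^2\big)$, and since $\omega,\bar\omega$ are $v$-th roots of unity this equals $2v(|p|^2+|q|^2)$; the Golay property of $(A,B)$ gives $|p|^2+|q|^2=\frac{1}{2}(|A(z)|^2+|B(z)|^2)=g$, so the diagonal contributes exactly $2gv$.

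It remains to see that the four cross terms cancel, and this is the step I expect to require the most care. Using the reality relations $\overline{C(\bar\omega)}=C(\omega)$ and $\overline{D(\bar\omega)}=D(\omega)$, the cross terms coming from $|E(\zeta)|^2$ collapse to $p\bar q\,\omega\,C(\omega)D(\omega)$ together with its complex conjugate, while those from $|F(\zeta)|^2$ collapse to $-p\bar q\,\omega\,C(\omega)D(\omega)$ together with its conjugate; hence they annihilate in pairs. Therefore $|E(\zeta)|^2+|F(\zeta)|^2=2gv$ for every $gv$-th root of unity $\zeta$, which by the reformulation of the first paragraph is exactly the assertion that $(E,F)$ is a periodic Golay pair of length $gv$. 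Conceptually the only obstacle is the bookkeeping in this cancellation; the genuinely new point over Turyn's original theorem is that the arguments fed into $C$ and $D$ are always $v$-th roots of unity, so that the periodic---rather than the aperiodic---complementarity of $(C,D)$ is precisely what the computation demands, whereas $(A,B)$, whose argument $z$ ranges over the whole unit circle, must remain a true Golay pair.
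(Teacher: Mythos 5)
Your proof is correct. In substance it is the same computation as the paper's, recast in spectral language: the paper works entirely in the Laurent polynomial ring $\bZ[z,z^{-1}]$, recording the Golay property of $(A,B)$ as the identity $A(z)A(z^{-1})+B(z)B(z^{-1})=2g$ and the periodic property of $(C,D)$ as a congruence modulo $(z^v-1)$, then expands $4E(z)E(z^{-1})+4F(z)F(z^{-1})$, observes the same four-fold cross-term cancellation you carry out, and substitutes $z\mapsto z^g$ to turn the congruence mod $(z^v-1)$ into one mod $(z^{gv}-1)$. You instead evaluate at a $gv$-th root of unity $\zeta$ and use the $\psd$ characterization (sum of squared magnitudes equal to $2gv$ at all $gv$-th roots of unity); the two formalizations are equivalent, since an integer Laurent polynomial vanishing at every $gv$-th root of unity is divisible by $z^{gv}-1$, and your ``key observation'' that $\zeta^g$ is a $v$-th root of unity is exactly the paper's substitution step in pointwise form. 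Your expansion, the identity $|p|^2+|q|^2=\tfrac12\left(|A(\zeta)|^2+|B(\zeta)|^2\right)=g$, and the cross-term cancellation via $\overline{C(\bar\omega)}=C(\omega)$, $\overline{D(\bar\omega)}=D(\omega)$ all check out.

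One genuine improvement over the paper's write-up: you verify that $E$ and $F$ are binary sequences of length $gv$, using the block structure of Turyn's product and the fact that the coefficients of $\tfrac12(A+B)$ and $\tfrac12(A-B)$ lie in $\{-1,0,1\}$ with complementary supports. The paper's proof establishes only the congruence $E(z)E(z^{-1})+F(z)F(z^{-1})\equiv 2gv \bmod (z^{gv}-1)$ and leaves binarity implicit (it is inherited from Turyn's construction and, as you correctly note, uses only that $C,D$ are binary, not their complementarity); strictly speaking this check is needed before the congruence can be read as the periodic Golay property, so your argument is the more complete of the two.
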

\begin{proof}
The fact that $(A,B)$ is a Golay pair is equivalent to the 
identity
\begin{equation} \label{prva}
A(z)A(z^{-1}) + B(z)B(z^{-1}) = 2g.
\end{equation}
Similarly, the fact that $(C,D)$ is a periodic Golay pair is equivalent to the congruence
\begin{equation} \label{druga}
C(z)C(z^{-1}) + D(z)D(z^{-1}) \equiv 2v ~{\rm mod}~(z^v -1),
\end{equation}
where $(z^v -1)$ is the ideal of the Laurent polynomial ring 
$\bZ[z,z^{-1}]$ generated by $z^v -1$. 
A computation gives that

\begin{eqnarray*}
4E(z)E(z^{-1}) &=& 
(A(z)+B(z))(A(z^{-1})+B(z^{-1})) C(z^g)C(z^{-g}) +\\
&& (A(z)-B(z))(A(z^{-1})-B(z^{-1})) D(z^g)D(z^{-g}) +\\
&& (A(z)+B(z))(A(z^{-1})-B(z^{-1})) C(z^g)D(z^g) z^{g-gv} +\\
&& (A(z)-B(z))(A(z^{-1})+B(z^{-1})) C(z^{-g})D(z^{-g}) z^{gv-g},
\end{eqnarray*}

\begin{eqnarray*}
4F(z)F(z^{-1}) &=& 
(A(z)-B(z))(A(z^{-1})-B(z^{-1})) C(z^g)C(z^{-g}) +\\
&& (A(z)+B(z))(A(z^{-1})+B(z^{-1})) D(z^g)D(z^{-g}) +\\
&& (B(z)-A(z))(A(z^{-1})+B(z^{-1})) C(z^{-g})D(z^{-g}) 
z^{gv-g} +\\
&& (A(z)+B(z))(B(z^{-1})-A(z^{-1})) C(z^g)D(z^g) z^{g-gv}.
\end{eqnarray*}

By using (\ref{prva}) we obtain that
\begin{equation}
E(z)E(z^{-1}) + F(z)F(z^{-1})=
g(C(z^g)C(z^{-g}) + D(z^g)D(z^{-g})).
\end{equation}

It follows from (\ref{druga}) that
\begin{equation}
C(z^g)C(z^{-g}) + D(z^g)D(z^{-g})\equiv 2v ~
{\rm mod}~(z^{gv} -1),
\end{equation}
and so 
\begin{equation}
E(z)E(z^{-1}) + F(z)F(z^{-1}) \equiv 2gv ~{\rm mod}~(z^{gv} -1).
\end{equation}
This means that $(E,F)$ is a periodic Golay pair.
\end{proof}

As an example, let us take the Golay pair $(A=[-1,1],B=[1,1])$ 
of length $g=2$ and the periodic Golay pair $(C,D)$ of length 
$v=34$ with associated SDS $(X,Y)$ given by 
\begin{eqnarray*}
X &=& \{ 0,1,2,3,5,6,8,12,13,14,15,18,20,22,24,31 \}, \\
Y &=& \{ 0,1,4,5,7,8,9,14,15,18,23,26,28 \}.
\end{eqnarray*}
Its parameters are $(v=34;r=16,s=13;\lambda=12)$ and $n=17$. 
We compute the product $(E,F)=(A,B)\cdot(C,D)$ by using 
the multiplication from Propositions \ref{Mnozenje-1} and 
\ref{Mnozenje-2}. The associated SDSs $(P,Q)$ and $(R,S)$, 
respectively, are given by
\begin{eqnarray*}
P &=& \{ 0,2,8,9,10,14,15,16,18,19,21,23,28,30,33,35,36,39,43,46, \\
&& 47,51,52,53,55,56,57,59,61,65,67 \}, \\
Q &=& \{ 0,1,2,3,5,6,7,8,9,10,12,13,14,16,17,19,20,23,24,25,27, \\
&& 28,29,32,33,34,35,41,43,44,45,46,47,48,52,55,58,61,63\}, \\
R &=& \{ 4,5,7,13,18,21,22,23,25,26,27,30,33,35,36,38,39,40, \\
&& 41,42,43,45,49,50,51,54,55,56,59,60,61,62,63,64,65,66,67 \}, \\
S &=& \{ 1,3,5,7,10,11,13,14,17,20,25,27,29,30,31,36,37,38, \\
&& 41,45,48,49,50,52,56,58,63,64,66 \}. 
\end{eqnarray*}

After replacing $Q$ with its complement $Q'$ in $\bZ_v$, the parameters of these two SDS are $(68;31,29;26)$. However, one can verify that they are not equivalent as SDSs. Indeed, the  
canonical forms (see \cite{Djokovic:AnnComb:2011}) 
$(\hat{P},\hat{Q'})$ and $(\hat{R},\hat{S})$ of the SDSs 
$(P,Q')$ and $(R,S)$ are given by
\begin{eqnarray*}
\hat{P} &=& \{ 0,1,2,3,6,7,8,10,13,14,16,17,18,20,24,28,31,33,35,
 \\
&& 36,38,40,41,43,44,49,52,53,55,62,64 \}, \\
\hat{Q'} &=& \{ 0,1,2,3,4,5,11,12,13,16,18,19,20,23,24,25,26,28,30, \\
&& 36,39,41,42,45,50,51,55,60,64 \}, \\
\hat{R} &=& \{ 0,1,2,3,4,6,10,12,13,14,17,19,21,23,26,27,29,32,
 \\
&& 35,37,38,41,42,43,49,51,53,56,60,61,65 \}, \\
\hat{S} &=& \{ 0,1,3,4,5,6,8,9,10,11,13,15,16,20,23,24,26,27,28,
 \\
&& 36,38,41,44,45,50,51,52,57,58 \}. 
\end{eqnarray*}

It is rather surprising that the two multiplications described above produce nonequivalent periodic Golay pairs.

\section{Computational results for periodic Golay pairs}
\label{sec:PG-results}

No $v\in\Gamma$ is divisible by a prime congruent to 3 modulo 4 (see \cite{EKS:JCT-A:1990}). So far, none of the known members of $\Pi$ were divisible by a prime congruent to 3 modulo 4. Hence, the periodic Golay pairs constructed below are the first examples having the length divisible by a prime congruent to 3 modulo 4, namely the prime 3. Consequently, no periodic Golay pair of length 72 can be constructed by multiplying a nontrivial Golay pair and a periodic Golay pair.

We list eight pairwise nonequivalent SDSs with parameters $(72;36,30;30)$. As $n=36$ we have $v=2n$, and so these SDSs give periodic Golay pairs of length 72.
All solutions are in the canonical form defined in \cite{Djokovic:AnnComb:2011} and since they are different, this implies that they are pairwise non\-equivalent.

\begin{eqnarray*}
1) &&\{0,1,2,3,4,5,6,7,10,12,13,15,18,20,22,24,26,27,29,30,31,\\
&& 35,37,39,40,43,44,47,51,52,53,56,58,59,62,63\}, \\
&& \{0,1,2,3,5,6,8,11,12,13,14,15,18,21,23,25,29,32,33,39,41, \\
&& 42,43,47,48,55,56,62,67,69\}, \\
2) &&\{0,1,2,3,4,5,6,7,10,12,13,15,18,20,22,24,26,27,29,30,31,\\
&& 35,37,39,40,43,44,47,51,52,53,56,58,59,62,63\}, \\
&& \{0,2,3,5,7,8,9,11,14,15,17,18,19,23,24,30,31,32,33,37,38, \\
&& 41,42,44,48,49,51,59,61,69\}, \\
3) &&\{0,1,2,3,5,7,10,11,12,13,15,17,19,20,26,27,28,29,30,32,\\
&& 34,35,38,39,40,42,43,46,49,51,54,56,59,60,63,64\}, \\
&& \{0,1,2,3,4,6,7,8,9,14,15,16,20,22,24,26,27,31,33,36,37,40, \\
&& 42,43,46,49,54,57,58,68\}, \\
4) &&\{0,1,2,3,5,7,10,11,12,13,15,17,19,20,26,27,28,29,30,32, \\
&& 34,35,38,39,40,42,43,46,49,51,54,56,59,60,63,64\}, \\
&& \{0,1,3,4,6,7,8,9,10,14,15,18,19,20,22,25,26,31,32,36,38, \\
&& 40,42,45,49,51,52,57,58,60\}, \\
5) &&\{0,1,2,4,5,6,7,9,10,11,14,15,16,17,22,23,25,26,29,30,33,\\
&& 35,37,38,43,45,46,48,50,51,52,54,55,60,62,63\}, \\
&& \{0,2,3,5,7,8,9,11,14,17,18,19,21,23,24,27,30,31,32,37,38, \\
&& 41,42,44,48,49,57,59,61,63\}, \\
6) &&\{0,1,3,4,5,6,7,8,9,10,14,15,17,18,19,20,22,25,26,29,31, \\
&& 32,36,38,40,41,42,45,49,51,52,53,57,58,60,65\}, \\
&& \{0,1,2,5,7,10,11,12,13,17,19,20,26,28,29,30,32,34,35,38, \\
&& 40,42,43,46,49,54,56,59,60,64\}, \\
7) &&\{0,1,3,4,5,6,7,8,9,10,14,15,17,18,19,20,22,25,26,29,31,\\
&& 32,36,38,40,41,42,45,49,51,52,53,57,58,60,65\}, \\
&& \{0,1,3,4,5,6,9,10,13,16,18,19,21,23,24,27,30,34,35,40,46, \\
&& 47,48,49,53,55,57,63,65,67\}, \\
8) &&\{0,2,3,4,5,7,8,9,11,14,15,16,17,18,19,23,24,28,30,31,32,\\
&& 33,37,38,40,41,42,44,48,49,51,52,59,61,64,69\}, \\
&& \{0,1,2,4,5,6,7,10,12,13,18,20,22,24,26,29,30,31,35,37,40, \\
&& 43,44,47,52,53,56,58,59,62\}.
\end{eqnarray*}

Let $v\in\Pi$ and $v>1$. Then it is known that $v$ must be even and $v/2$ must be a sum of two squares. 
Moreover there is an SDS with parameters $(v;r,s;\lambda)$ such that $v=2n$. The Arasu-Xiang condition 
\cite[Corollary 3.6]{Arasu:Xiang:DCC:1992} for the existence of such SDS must be satisfied. This gives 
another restriction on $v$. 

The product $\Gamma_0 S$, where $S=\{1,34,50,58,72,74,82,122,202,226\}$, is the set of lengths of the currently known periodic Golay pairs. For reader's convenience we list the integers in the range $1<v\le300$ which 
satisfy all necessary conditions mentioned above and do not belong to $\Gamma_0 S$. There are just sixteen 
of them:
$$
90,106,130,146,170,178,180,194,212,218,234,250,274,290,292,298.
$$
These are the smallest lengths for which the existence question of periodic Golay pairs remains unsolved.

\section{Acknowledgements}
The authors wish to acknowledge generous support by NSERC.
This research was enabled in part by support provided by WestGrid 
(www.westgrid.ca) and Compute Canada Calcul Canada 
(www.computecanada.ca). We thank a referee for his suggestions,

\end{document}